\renewcommand\eqref[1]{(\ref{#1})} 
\title[Improved Discrete Hardy inequality]{Improvement of the Discrete Hardy inequality}
\author[P. Roychowdhury]{Prasun Roychowdhury}
\address{
	Prasun Roychowdhury
	\endgraf
	Mathematics Division
    \endgraf
    National Center for Theoretical Sciences
	\endgraf
	National Taiwan University
    \endgraf
    No. 1, Sec. 4, Roosevelt Road, Taipei 10617
	\endgraf
	Taiwan
	\endgraf
	{\it E-mail address} {\rm prasunroychowdhury1994@gmail.com}}
\author[D. Suragan]{Durvudkhan Suragan}
\address{
	Durvudkhan Suragan
	\endgraf
	Department of Mathematics
	\endgraf
	Nazarbayev University
	\endgraf
	Kazakhstan
	\endgraf
	{\it E-mail address} {\rm durvudkhan.suragan@nu.edu.kz}}
\subjclass[2020]{ 39B62; 26D15; 39A12}
\keywords{Discrete Hardy's inequality; sharp constant; difference operator; uncertainty principle of discrete datum}
\date{\today}
\theoremstyle{plain}
\newtheorem{theorem}{Theorem}[section]
\newtheorem{proposition}{Proposition}[section]
\newtheorem{lemma}{Lemma}[section]
\newtheorem{remark}{Remark}[section]
\def\N{{I\!\!N}}
\numberwithin{equation}{section} \allowdisplaybreaks
\newcommand{\n}{{\mathbb{N}}}
\begin{document}
 	\begin{abstract}
       We establish a novel improvement of the classical discrete Hardy inequality, which gives the discrete version of a recent (continuous) inequality of Frank, Laptev, and Weidl. Our arguments build on certain weighted inequalities based on discrete analogues of symmetric decreasing rearrangement techniques.    
	\end{abstract}
	\maketitle
	
\section{Introduction}
In a letter dated 21 June 1921, E.~Landau communicated to G.~H.~Hardy by including the proof of an inequality which read as: Let $1<p<\infty$ and $\{a_n\}_{n=1}^{\infty}$ be a sequence of nonnegative real numbers, then there holds
\begin{align}\label{hardy}
	  \sum_{n=1}^{N}\bigg(\frac{1}{n}\sum_{k=1}^{n}a_{k}\bigg)^p\leq \bigg(\frac{p}{p-1}\bigg)^p	\sum_{n=1}^{N}a_n^p	
\end{align}
for all positive integers $N\in\n:=\{1,2,\cdots\}$ or $N=\infty$. Moreover, the constant $\big(\frac{p}{p-1}\big)^p$ is the best constant in the sense that the inequality \eqref{hardy} does not hold if one replaces the constant on the right-hand side with some smaller one. Also if the right-hand side of \eqref{hardy} is finite, then the equality case holds if and only if $\{a_n\}_{n=1}^{\infty}$ is the identically zero sequence.

Afterward, the inequality \eqref{hardy} was independently proved by Hardy \cite{hardy} when he was trying to find a simple proof of the famous \emph{Hilbert inequality} related to the convergence of double series sum. Nowadays, in the literature, the inequality \eqref{hardy} is known as the discrete version of \emph{Hardy's inequality} or sometimes it is called the \emph{Hardy-Landau inequality}. We refer to \cite{lef-1} for a short and direct proof of the discrete Hardy inequality. Other mathematicians of the beginning of the 20th century such as G.~P\'olya, I.~Schur, and M.~Riesz also contributed to the development of Hardy’s and related inequalities during the period 1906-1928 and we refer to \cite{kuf} for an interesting historical survey.

Let $C_c(\n)$ be the space of all finitely supported functions on $\n$. Then for $\psi\in C_c(\n)$, one defines the first order difference operator by
\begin{align*}
    \nabla \psi(n):=\psi(n)-\psi(n-1) \quad \text{ for all }n\geq 1,
\end{align*}
where $\psi$ satisfies the Dirichlet boundary condition $\psi(0)=0$. 

Let $1<p<\infty$. For all $\psi\in C_c(\n)$ with $\psi(0)=0$ there holds 
\begin{align}\label{grad-hardy}
 \sum_{n=1}^{\infty}\frac{|\psi(n)|^p}{n^p}\leq 	  \bigg(\frac{p}{p-1}\bigg)^p	\sum_{n=1}^{\infty}|\nabla \psi(n)|^p,
\end{align}
with the sharp constant $\big(\frac{p}{p-1}\big)^p$  and equality holds only when $\psi\equiv 0$. It is worth mentioning that \eqref{grad-hardy} can be directly derived from \eqref{hardy}. 

There is an extensive literature on various improvements of the continuous analogues of Hardy’s inequality \eqref{grad-hardy}, but compared to that little is known about these inequalities in the discrete setting. Recently, for the case $p=2$, the inequality \eqref{grad-hardy} has been improved by several authors (see e.g. \cite{GKS, hng, kpm-2, kls22, kjc}). Among those, in \cite{kpm-1} it was shown that by using criticality theory arguments on graphs for discrete Schr\"odinger operators one can obtain the best possible improvement in the case $p=2$. In addition, here we should mention the very recent work \cite{gupta1d}, where the author considered the power weight to improve \eqref{grad-hardy} significantly. For the case $p\neq 2$, in the literature, there are only a few studies which consider improvements of \eqref{grad-hardy}. An improved version of the classical discrete Hardy inequality \eqref{grad-hardy} was obtained for $1 < p < \infty$ in \cite{fkp}. Without claim of completeness, we also mention the references \cite{fish, ll, kpm-3, lef-2} and \cite[Chapter 7]{lif} for the interested reader to revisit recent progress in the theory of the discrete Hardy inequality.

In this paper, we prove 
	\begin{align}\label{imp-nograd-hardy}
		  \sum_{n=1}^{\infty}\sup_{0< m <\infty}\bigg|\min\biggl\{\frac{1}{n}\,,\,\frac{1}{m}\biggr\}\sum_{k=1}^{m}\psi(k)\bigg|^p \leq \bigg(\frac{p}{p-1}\bigg)^p	\sum_{n=1}^{\infty}\big|\psi(n)\big|^p
	\end{align}
and
	\begin{align}\label{imp-grad-hardy}
		\sum_{n=1}^{\infty}\max\biggl\{\sup_{0< m \leq n}\frac{|\psi(m)|^p}{n^p},\: \sup_{n\leq  m <\infty} \frac{|\psi(m)|^p}{m^p}\biggr\} \leq  \bigg(\frac{p}{p-1}\bigg)^p \sum_{n=1}^{\infty}\big|\nabla \psi(n)\big|^p
	\end{align}
for a given $1<p<\infty$ and any $\psi\in C_c(\mathbb{N})$ (with the Dirichlet boundary condition $\psi(0)=0$ in
\eqref{imp-grad-hardy}).
It is straightforward to verify that 
\begin{align}\label{sharper1}
  \sum_{n=1}^{\infty}\bigg|\frac{1}{n}
  \sum_{k=1}^{n}\psi(k)\bigg|^p\leq     \sum_{n=1}^{\infty}\sup_{0< m <\infty}\bigg|\min\biggl\{\frac{1}{n}\,,\,\frac{1}{m}\biggr\}
  \sum_{k=1}^{m}\psi(k)\bigg|^p ,
\end{align}
and 
\begin{align}\label{sharper2}
  \sum_{n=1}^{\infty}\frac{|\psi(n)|^p}{n^p}\leq   \sum_{n=1}^{\infty}\max\biggl\{\sup_{0< m \leq n}\frac{|\psi(m)|^p}{n^p},\: \sup_{n\leq  m <\infty} \frac{|\psi(m)|^p}{m^p}\biggr\} ,
\end{align}
which give improvements of \eqref{hardy} and \eqref{grad-hardy}, correspondingly. We believe that these results give new insight into the theory of discrete Hardy's inequalities without having a restriction of the power $1<p<\infty$. The main inspiration of this work comes from the very recent progress in the field derived by R.~L.~Frank, A.~Laptev, and T.~Weidl \cite{rupert-22}, and in their paper, the authors study the continuous versions of the above inequalities. To the best of our knowledge, for $1<p<\infty$ their version of improved Hardy's inequality is completely new in the literature and it can be also called the Frank-Laptev-Weidl improvement of the Hardy inequality \cite{rrs-22}.

Let us recall the result of Fischer, Keller, and Pogorzelski (see \cite[Theorem 1]{fkp}) on an improvement of the discrete $p$-Hardy inequality in the arithmetic summation form. We believe that for the discrete setting, up to now, it is the best possible improvement.

Let $1<p<\infty$. Then for all $\psi\in C_c(\n)$ there holds
\begin{align}\label{best-p-hardy}
    \sum_{n=1}^{\infty}v_p(n)\bigg|\sum_{k=1}^{n}\psi(k)\bigg|^p \leq \sum_{n=1}^{\infty}|\psi(n)|^p,
\end{align}
where $v_p$ is a strictly positive function given by
\begin{align*}
    v_p(n)=\bigg(1-\left(1-\frac{1}{n}\right)^{\frac{p-1}{p}}\bigg)^{p-1}-\bigg(\left(1+\frac{1}{n}\right)^{\frac{p-1}{p}}-1\bigg)^{p-1}
\end{align*}
with
\begin{align*}
    v_p(n)>\bigg(\frac{p-1}{p}\bigg)^p\frac{1}{n^p}, \quad n\in\n.
\end{align*}
Moreover, for any integer $p\geq2$ one has
\begin{equation}\label{vpn} v_p(n)=\sum_{l\in 2\n\cup\{0\}}
c_{l}\,n^{-l-p} 
\end{equation}
with $c_{l}>0$ and interchanging the order of summation we have
\begin{align}\label{best-hardy}
    \sum_{l\in 2\n\cup\{0\}}c_{l}\sum_{n=1}^{\infty}\frac{1}{n^{l+p}}\bigg|\sum_{k=1}^{n}\psi(k)\bigg|^p \leq \sum_{n=1}^{\infty}|\psi(n)|^p.
\end{align}
In Theorem \ref{best-hardy-imp} we give a sharper version of the inequality \eqref{best-hardy}.

In general, for all $p>1$ our technique implies a sharper version of the inequality \eqref{best-p-hardy} under the assumption $c_{l}\geq0$ in the weight function series expansion \eqref{vpn}.
Note that in \cite{fkp} the authors conjecture that all these coefficients are strictly positive for all $p>1$.
The obtained inequalities in the present paper can be also extended to more general settings, for example, on graphs (cf. \cite{kpm-2021} and \cite{kpm-1}).

This short paper has a simple structure. Section \ref{2} is the main toolbox of the paper. We construct a new sequence of non-increasing terms from the original sequence in such a way that they belong to the same sequence space $\ell^p(\n)$ with the same norm. In Section \ref{3}, we present the main results of this paper concerning new improvements of discrete Hardy's inequality. In Section \ref{4}, we give some immediate consequences of the obtained results including the so-called uncertainty principle for the discrete datum.

\section{Preliminaries}\label{2}
In this section, we discuss preliminary constructions which are the main tools for the proofs. Let $\psi\in C_c(\mathbb{N})$. We define a non-increasing function from $\psi(n)$. Thus, for all $n\in\mathbb{N}$ the new function $\tilde{\psi}(n)$ is given by the formula
\begin{equation*}
		\tilde{\psi}(n) :=
		\begin{dcases}
			\max\{|\psi(k)|\}_{k=1}^\infty & \text{if } n=1, \\
			\max\biggl\{\{|\psi(k)|\}_{k=1}^\infty \setminus \{\tilde{\psi}(k)\}_{k=1}^{n-1}\biggr\}  & \text{if } n\geq 2.\\
		\end{dcases}
	\end{equation*}
In the above definition, the numeration is repeated in this series according to its multiplicity. 

For clarity let us demonstrate the following simple example. Consider the sequence function
\begin{align*}
    \{u(n)\}_{n=1}^\infty=\{-4,3,3,-3,7,7,0,0,\cdots\}.
\end{align*}
Then by the above definition, we have
\begin{align*}
    \tilde{u}(1)=\max\{4,3,3,3,7,7,0,0,\cdots\}=7,
\end{align*}
\begin{align*}
    \tilde{u}(2)&=\max\left\{\{4,3,3,3,7,7,0,0,\cdots\}\setminus\{7\}\right\}\\&=
    \max\{4,3,3,3,7,0,0,\cdots\}=7,
\end{align*}
and in the same way one finds $\tilde{u}(3)=4$, $\tilde{u}(4)=3$, $\tilde{u}(5)=3$, $\tilde{u}(6)=3$, and $\tilde{u}(6+k)=0$ for $k\geq 1$.

By definition, the function $\tilde{\psi}(n)$ is non-increasing with respect to its variable $n$. Thus, $\tilde{\psi}(1)$ is the largest value of $|\psi|$, $\tilde{\psi}(2)$ is the second largest value of $|\psi|$ (considering the repetition) and so on. Note that this construction coincides with the symmetrization arguments on one-dimensional integer lattice (see \cite{gupta-sym}).

Before mentioning the important properties of the newly constructed sequence, let us first recall the sequence space $\ell^p(\n)$. For $1\leq p<\infty$, we say a sequence $\{a_n\}_{n=1}^{\infty}\in\ell^p(\n)$ if 
\begin{align*}
    \sum_{n=1}^{\infty}|a_n|^p<\infty.
\end{align*}
It is well known that $(\ell^p(\n),||\cdot||_p)$ is a Banach space and its norm can be defined by 
\begin{align*}
   ||a_n||_p:= \bigg(\sum_{n=1}^{\infty}|a_n|^p\bigg)^{1/p}.
\end{align*}

Now we are in a position to present some basic properties of the constructed sequence $\tilde{\psi}(n)$.
\begin{lemma}\label{com-lem}
Let $1\leq p<\infty$ and $\psi\in C_c(\mathbb{N})$. Then  $\{\psi(n)\}_{n=1}^{\infty}$ is an element of the sequence space $\ell^p(\mathbb{N})$ and we have:
\begin{itemize}
    \item[(i)] $\{\tilde{\psi}(n)\}_{n=1}^{\infty}$ becomes an element of $\ell^p(\mathbb{N})$ with the same norm, i.e., there holds
    \begin{align}\label{norm}
    \sum_{n=1}^\infty|\psi(n)|^p=\sum_{n=1}^\infty|\Tilde{\psi}(n)|^p.
\end{align}
\item[(ii)] There also holds
\begin{align}\label{length}
    \sum_{n=1}^m|\psi(n)|^p\leq\sum_{n=1}^m|\Tilde{\psi}(n)|^p,\quad m\in\n.
\end{align}
\end{itemize}
\end{lemma}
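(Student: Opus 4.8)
The plan is to prove the two assertions essentially by understanding the combinatorial structure of the rearrangement $\tilde{\psi}$. Since $\psi \in C_c(\mathbb{N})$ has finite support, the sequence $\{|\psi(n)|\}$ has only finitely many nonzero terms, so $\{\psi(n)\} \in \ell^p(\mathbb{N})$ trivially, and $\tilde{\psi}$ is obtained by listing the values $|\psi(1)|, |\psi(2)|, \dots$ in non-increasing order with multiplicity. The key observation I would isolate first is that $\tilde{\psi}$ is a \emph{bijective rearrangement} of the multiset $\{|\psi(n)|\}_{n=1}^\infty$: there exists a bijection $\sigma : \mathbb{N} \to \mathbb{N}$ with $\tilde{\psi}(n) = |\psi(\sigma(n))|$ for every $n$. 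This is exactly what the max-based construction encodes (removing one already-used entry at each step, respecting multiplicity).

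Given this bijection, part (i) is immediate. I would write
\begin{align*}
\sum_{n=1}^\infty |\tilde{\psi}(n)|^p = \sum_{n=1}^\infty |\psi(\sigma(n))|^p = \sum_{n=1}^\infty |\psi(n)|^p,
\end{align*}
where the last equality is the standard fact that an absolutely convergent series (here a finite sum, since $\psi$ is finitely supported) may be reordered without changing its value. So \eqref{norm} reduces to invariance of a finite nonnegative sum under permutation, which needs no further argument.

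The substantive part is (ii), the partial-sum domination \eqref{length}. The plan is to prove, for each fixed $m \in \mathbb{N}$, that
\begin{align*}
\sum_{n=1}^m |\psi(n)|^p \leq \sum_{n=1}^m |\tilde{\psi}(n)|^p.
\end{align*}
The guiding principle is that $\tilde{\psi}(1), \dots, \tilde{\psi}(m)$ are, by construction, the $m$ \emph{largest} values among all $|\psi(k)|^p$ (counted with multiplicity), whereas $|\psi(1)|^p, \dots, |\psi(m)|^p$ are merely \emph{some} $m$ of those values. Formally I would argue that among any multiset of nonnegative reals, the sum of the $m$ largest entries dominates the sum of any other choice of $m$ entries. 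Because the map $x \mapsto x$ is order-preserving and we are summing $m$ terms on each side from the same multiset $\{|\psi(k)|^p\}$, selecting the top $m$ (the left-hand side of $\tilde\psi$) can only increase the total. I would make this rigorous by comparing the two selections termwise after sorting: sort $\{|\psi(1)|^p, \dots, |\psi(m)|^p\}$ into non-increasing order as $b_1 \geq \cdots \geq b_m$; then since $\tilde{\psi}(j)^p$ is the $j$-th largest value over \emph{all} indices and $b_j$ is the $j$-th largest over only the first $m$ indices, we have $b_j \leq \tilde{\psi}(j)^p$ for each $j \in \{1, \dots, m\}$, and summing over $j$ yields the claim.

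The main obstacle, and the only place demanding care, is justifying the termwise inequality $b_j \leq \tilde{\psi}(j)^p$ cleanly in the presence of repeated values and the max-with-removal definition. I would handle this by adopting the bijection formulation from the outset: $\tilde\psi$ is the non-increasing rearrangement of the full multiset $M = \{|\psi(k)|\}_{k\ge 1}$, while $b_1 \ge \cdots \ge b_m$ is the non-increasing rearrangement of the sub-multiset $M_m = \{|\psi(k)|\}_{k=1}^m \subseteq M$. Since $M_m \subseteq M$, the $j$-th largest element of a sub-multiset is at most the $j$-th largest element of the whole multiset — a monotonicity-of-order-statistics statement that I would state as a short self-contained lemma or verify directly by a counting argument (for any threshold $t$, the number of elements of $M_m$ exceeding $t$ is at most the number of elements of $M$ exceeding $t$). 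Once that monotonicity is in place, both (i) and (ii) follow without computation, so the entire proof rests on this one rearrangement fact.
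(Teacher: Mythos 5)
Your proposal is correct and follows essentially the same route as the paper: part (i) via rearrangement invariance of a finite nonnegative sum, and part (ii) by sorting the first $m$ values in non-increasing order and comparing termwise with $\tilde{\psi}(1),\dots,\tilde{\psi}(m)$ using the fact that the $j$-th largest element of a sub-multiset is at most the $j$-th largest element of the full multiset. The only difference is cosmetic: you spell out the order-statistics monotonicity via a threshold-counting argument, whereas the paper asserts it directly.
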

\begin{proof}
Since $\psi\in C_c(\n)$, it terminates after finitely many terms and by the construction  $\tilde{\psi}$ terminates after finitely many terms as well. Therefore, both $\psi$ and $\tilde{\psi}$ belong to the space $\ell^p(\n)$. It is clear that these sequences are rearrangements of each other. Thus,  we have \eqref{norm}. 

For the second claim, let us fix $m\in \n$. Then the sequence $\{|\psi(n)|\}_{n=1}^{m}$ can be arranged non-increasingly as in the above way. As a result, we get $\{\tilde{v}(n)\}_{n=1}^m$. Now for any $n\leq m$, there holds
\begin{align*}
    \tilde{v}(n)&= n^{th} \text{ largest element of }\biggl\{|\psi(1)|,|\psi(2)|,\cdots,|\psi(m)|\biggr\}\\& \leq n^{th} \text{ largest element of }\biggl\{|\psi(1)|,|\psi(2)|,\cdots,|\psi(m)|, |\psi(m+1)|,\cdots\biggr\}=\tilde{\psi}(n).
\end{align*}
It implies
\begin{align*}
    \sum_{n=1}^m|\psi(n)|^p=\sum_{n=1}^m|\tilde{v}(n)|^p\leq\sum_{n=1}^m|\Tilde{\psi}(n)|^p,
\end{align*}
which is the desired estimate.
\end{proof}

A simple observation on weighted positive non-increasing sequences gives the following statement.

\begin{lemma}\label{lemma-avg-w}
Let $\{a_n\}_{n=1}^\infty$ be a non-increasing sequence of nonnegative real numbers. Assume $w:\n\rightarrow \mathbb{R}$ is some strictly positive non-decreasing weight function with the property that for any $n,m\in\n$ with $n\leq m$ there holds $m w(n)\leq n w(m)$. Then we have
\begin{align}\label{avg-w}
\frac{1}{w(m)}\sum_{k=1}^{m}a_k\leq \frac{1}{w(n)}\sum_{k=1}^{n}a_k
\end{align}
for all $n\leq m$.
\end{lemma}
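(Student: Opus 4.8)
The plan is to prove the equivalent statement that the weighted partial average $\frac{1}{w(n)}\sum_{k=1}^{n}a_k$ is non-increasing in $n$. Writing $S_j:=\sum_{k=1}^{j}a_k$, the desired bound $\frac{S_m}{w(m)}\leq\frac{S_n}{w(n)}$ for $n\leq m$ is, after multiplying through by the positive quantity $w(n)w(m)$, equivalent to $w(n)S_m\leq w(m)S_n$. Splitting $S_m=S_n+T$ with $T:=\sum_{k=n+1}^{m}a_k\geq 0$, this reduces further to the single inequality
\[
 w(n)\,T\leq\bigl(w(m)-w(n)\bigr)S_n,
\]
where the factor $w(m)-w(n)$ is nonnegative because $w$ is non-decreasing.

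The key step is to estimate the two sums against the \emph{pivot} value $a_{n+1}$, exploiting that $\{a_k\}$ is non-increasing. On one hand, every term of the tail satisfies $a_k\leq a_{n+1}$ for $k\geq n+1$, so $T\leq (m-n)\,a_{n+1}$. On the other hand, every term of the head satisfies $a_k\geq a_n\geq a_{n+1}$ for $k\leq n$, so $S_n\geq n\,a_{n+1}$. Substituting these bounds (and using $w(m)-w(n)\geq 0$ to keep the direction of the head estimate correct), it suffices to establish $w(n)(m-n)\,a_{n+1}\leq\bigl(w(m)-w(n)\bigr)n\,a_{n+1}$.

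I would finish by distinguishing two cases. If $a_{n+1}=0$, then by monotonicity and nonnegativity $T=0$, and the reduced inequality becomes $0\leq\bigl(w(m)-w(n)\bigr)S_n$, which holds trivially. If $a_{n+1}>0$, I divide by $a_{n+1}$ and rearrange to $m\,w(n)\leq n\,w(m)$, which is precisely the hypothesis imposed on $w$; the case $n=m$ is immediate since both sides coincide and $T=0$. I do not anticipate a genuine obstacle here: the only points requiring care are keeping the inequality directions correct when multiplying the head estimate by the nonnegative factor $w(m)-w(n)$, and treating the degenerate pivot $a_{n+1}=0$ separately. An equivalent route would be to prove the one-step inequality $\frac{S_{n+1}}{w(n+1)}\leq\frac{S_n}{w(n)}$, which reduces to the hypothesis with $m=n+1$, and then telescope, but the direct estimate above avoids the chaining.
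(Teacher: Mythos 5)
Your proposal is correct and follows essentially the same route as the paper: clear denominators, split the sum at $n$, bound the tail above by $(m-n)$ times a pivot term and the head below by $n$ times a pivot term, and reduce to the hypothesis $m\,w(n)\leq n\,w(m)$. The only cosmetic differences are that you use $a_{n+1}$ as the pivot for both estimates and handle the degenerate case $a_{n+1}=0$ explicitly, whereas the paper keeps the expression as a product $a_n\cdot\bigl(nw(m)-mw(n)\bigr)\geq 0$ and thereby avoids the case split.
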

\begin{proof}
 $\{a_n\}_{n=1}^{\infty}$ is non-increasing sequence, that is, $a_n\geq a_{n+1}$ for all $n\geq1$. So, for $n\leq m$, we have
\begin{align*}
  &\frac{1}{w(n)}\sum_{k=1}^{n}a_k-\frac{1}{w(m)}\sum_{k=1}^{m}a_k\\
  &=\frac{1}{w(n)w(
  m)}  \left((w(m)-w(n))\sum_{k=1}^{n}\:a_k-w(n)\sum_{k=n+1}^{m}\:a_k \right)\\
  &\geq \frac{1}{w(n)w(m)}  \left(\:(w(m)-w(n))\,n\,a_n-(m-n)\:w(n)\:a_{n+1}\right)\\&\geq \frac{1}{w(n)w(m)}  \left(\:(w(m)-w(n))n\:a_n-w(n)(m-n)\:a_{n}\right)\\
  &=\frac{a_n}{w(n)w(m)}\:\left(\:nw(m)-w(n)n-mw(n)+nw(n)\right)\\&=\frac{a_n}{w(n)w(m)}\:\left(\:nw(m)-mw(n)\right)\ge 0.
\end{align*}
At the end, non-negativity follows from the assumption on the weight function $w$.
\end{proof}

\begin{remark}
Note that by setting $w(s)=s$ for $s\in \n$ in Lemma \ref{lemma-avg-w} we get
\begin{align}\label{avg}
\frac{1}{m}\sum_{k=1}^{m}a_k\leq \frac{1}{n}\sum_{k=1}^{n}a_k 
\end{align}
for any $n,m\in\n$ with $n\leq m$.
\end{remark}

\section{Main Results}\label{3}
Now we state the main results of this paper. First, the improved version of the inequality \eqref{hardy} is proved. Then the new improvement of discrete Hardy's inequality \eqref{imp-grad-hardy} is derived. 

\begin{proposition}\label{key-prop}
	Let $1\leq p<\infty$ and $\psi\in C_c(\n)$. Assume $w:\n\rightarrow \mathbb{R}$ is some strictly positive non-decreasing weight function such that $m w(n)\leq n w(m)$ for any $n,m\in\n$ with $n\leq m$. Then there holds
	\begin{align}\label{key-prop-eqn}
	\sup_{0<m<\infty}\bigg|\min\biggl\{\frac{1}{w(n)},\frac{1}{w(m)}\biggr\}\sum_{k=1}^{m}\psi(k)\bigg|^p  \leq \left( \frac{1}{w(n)}\sum_{k=1}^{n}|\Tilde{\psi}(k)|\right)^{p}.
	\end{align}
\end{proposition}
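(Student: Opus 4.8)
The plan is to establish the pointwise (in $m$) estimate
\[
\bigg|\min\Bigl\{\tfrac{1}{w(n)},\tfrac{1}{w(m)}\Bigr\}\sum_{k=1}^{m}\psi(k)\bigg| \le \frac{1}{w(n)}\sum_{k=1}^{n}|\tilde{\psi}(k)|
\]
for every $m\in\n$, with $n$ regarded as fixed, and then to raise both sides to the power $p$ and pass to the supremum over $m$. This last step is legitimate precisely because the right-hand side is independent of $m$, so that the supremum of the left-hand side is controlled by the same constant.

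First, irrespective of the relation between $n$ and $m$, I would strip off the signs by the triangle inequality, $|\sum_{k=1}^{m}\psi(k)|\le\sum_{k=1}^{m}|\psi(k)|$, and then replace the original sequence by its rearrangement using Lemma~\ref{com-lem}(ii) with exponent $1$, that is \eqref{length}, to obtain $\sum_{k=1}^{m}|\psi(k)|\le\sum_{k=1}^{m}|\tilde{\psi}(k)|=\sum_{k=1}^{m}\tilde{\psi}(k)$, the final equality holding because $\tilde{\psi}\ge 0$ by construction. This reduction is the reason the whole toolbox of Section~\ref{2} is needed: it transfers the problem from the arbitrary $\psi$ to the monotone sequence $\tilde\psi$.

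The heart of the argument is a case split governed by the monotonicity of $w$, which decides where the minimum is attained. If $m\le n$, then $w(m)\le w(n)$, so $\min\{1/w(n),1/w(m)\}=1/w(n)$; since the partial sums of the nonnegative non-increasing sequence $\tilde{\psi}$ increase with the upper limit, one has $\frac{1}{w(n)}\sum_{k=1}^{m}\tilde{\psi}(k)\le\frac{1}{w(n)}\sum_{k=1}^{n}\tilde{\psi}(k)$, which is the desired bound. If instead $m\ge n$, then $w(m)\ge w(n)$ and the minimum equals $1/w(m)$; here I would invoke the weighted averaging Lemma~\ref{lemma-avg-w} applied to the non-increasing nonnegative sequence $\{\tilde{\psi}(k)\}$ together with the weight $w$, whose standing hypothesis $m\,w(n)\le n\,w(m)$ is exactly the assumption of the proposition. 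Inequality \eqref{avg-w} then yields $\frac{1}{w(m)}\sum_{k=1}^{m}\tilde{\psi}(k)\le\frac{1}{w(n)}\sum_{k=1}^{n}\tilde{\psi}(k)$, completing this case as well.

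I do not expect a genuine obstacle; the single point demanding care is that Lemma~\ref{lemma-avg-w} must be applied to $\tilde{\psi}$ and not to $\psi$, because it requires the monotonicity that only the rearranged sequence enjoys, and this is precisely what the preliminary passage through \eqref{length} secures. Combining the two cases gives the pointwise inequality, after which taking $p$-th powers (all quantities being nonnegative) and then the supremum over $0<m<\infty$ finishes the proof.
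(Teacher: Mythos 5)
Your proposal is correct and follows essentially the same route as the paper's own proof: the same case split $m\le n$ versus $m\ge n$, the same passage from $\psi$ to $\tilde\psi$ via the triangle inequality and \eqref{length} (with exponent $1$), and the same invocation of Lemma~\ref{lemma-avg-w} for the rearranged sequence in the case $n\le m$. The only cosmetic difference is that you perform the reduction to $\tilde\psi$ once before the case split instead of inside each case.
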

\begin{proof}
We split the proof into two cases.

{\bf Case 1:} Let $0<m\leq n$. Then we have
	\begin{align*}
		\bigg|\min\biggl\{&\frac{1}{w(n)},\frac{1}{w(m)}\biggr\}\sum_{k=1}^{m}\psi(k)\bigg|\leq\frac{1}{w(n)}\sum_{k=1}^{m}|\psi(k)|\\&\overset{\eqref{length}}{\leq} \frac{1}{w(n)}\sum_{k=1}^{m}|\Tilde{\psi}(k)|\leq\frac{1}{w(n)}\sum_{k=1}^{n}|\Tilde{\psi}(k)|.
	\end{align*}
	
{\bf Case 2:} Let $n\leq m<\infty$. Then exploiting the non-increasing nature of $\Tilde{\psi}$, we obtain
	\begin{align*}
		\bigg|\min\biggl\{&\frac{1}{w(n)},\frac{1}{w(m)}\biggr\}\sum_{k=1}^{m}\psi(k)\bigg|\leq\frac{1}{w(m)}\sum_{k=1}^{m}
		|\psi(k)|\\&\overset{\eqref{length}}{\leq}\frac{1}{w(m)}\sum_{k=1}^{m}
		|\tilde{\psi}(k)|\overset{\eqref{avg-w}}{\leq}\frac{1}{w(n)}\sum_{k=1}^{n}|\Tilde{\psi}(k)|. 
	\end{align*}
	It follows from both the cases that
	\begin{align*}
		\bigg|\min\biggl\{\frac{1}{w(n)},\frac{1}{w(m)}\biggr\}\sum_{k=1}^{m}\psi(k)\bigg|\leq \frac{1}{w(n)}\sum_{k=1}^{n}|\Tilde{\psi}(k)|.
	\end{align*}
This gives the estimate \eqref{key-prop-eqn}.
\end{proof}

Now we are in a position to state one of our main results which improves the original discrete Hardy inequality \eqref{hardy}.
\begin{theorem}\label{mainthm-1}
	Let $1<p<\infty$ and $\psi\in C_c(\n)$. Then there holds
	\begin{align*}
		  \sum_{n=1}^{\infty}\sup_{0< m <\infty}\bigg|\min\biggl\{\frac{1}{n}\:,\:\frac{1}{m}\biggr\}\sum_{k=1}^{m}\psi(k)\bigg|^p \leq \bigg(\frac{p}{p-1}\bigg)^p	\sum_{n=1}^{\infty}\big|\psi(n)\big|^p.
	\end{align*}
Moreover, the constant $\big(\frac{p}{p-1}\big)^p$ is sharp.
\end{theorem}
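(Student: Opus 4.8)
The plan is to reduce the improved inequality to the classical discrete Hardy inequality \eqref{hardy}, now applied to the nonnegative non-increasing rearrangement $\tilde\psi$ rather than to $\psi$ itself, using the preliminary results established above. First I would specialize Proposition \ref{key-prop} to the weight $w(s)=s$, which satisfies the required hypothesis $mw(n)\leq nw(m)$ with equality whenever $n\leq m$. Since $\tilde\psi\geq 0$ by construction, so that $|\tilde\psi(k)|=\tilde\psi(k)$, this yields for every fixed $n\in\n$ the pointwise bound
\begin{align*}
\sup_{0<m<\infty}\bigg|\min\biggl\{\frac{1}{n},\frac{1}{m}\biggr\}\sum_{k=1}^{m}\psi(k)\bigg|^p \leq \bigg(\frac{1}{n}\sum_{k=1}^{n}\tilde\psi(k)\bigg)^{p}.
\end{align*}

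Next I would sum this bound over all $n\in\n$, giving
\begin{align*}
\sum_{n=1}^{\infty}\sup_{0<m<\infty}\bigg|\min\biggl\{\frac{1}{n},\frac{1}{m}\biggr\}\sum_{k=1}^{m}\psi(k)\bigg|^p \leq \sum_{n=1}^{\infty}\bigg(\frac{1}{n}\sum_{k=1}^{n}\tilde\psi(k)\bigg)^{p}.
\end{align*}
Because $\{\tilde\psi(k)\}_{k=1}^\infty$ is a sequence of nonnegative reals, the classical discrete Hardy inequality \eqref{hardy} applies to it directly and bounds the right-hand side by $\big(\frac{p}{p-1}\big)^p\sum_{n=1}^\infty\tilde\psi(n)^p$. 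Finally, the norm-preservation identity \eqref{norm} of Lemma \ref{com-lem} replaces $\sum_{n=1}^\infty\tilde\psi(n)^p$ by $\sum_{n=1}^\infty|\psi(n)|^p$, which closes the chain and establishes the inequality. Since $\psi\in C_c(\n)$, all sums appearing here are finite, so every step is applied in a meaningful rather than vacuous manner.

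For the sharpness of $\big(\frac{p}{p-1}\big)^p$, the key observation is that the left-hand side of the improved inequality dominates that of the classical one. Indeed, choosing $m=n$ in the supremum forces $\min\{1/n,1/m\}=1/n$, so that
\begin{align*}
\sum_{n=1}^{\infty}\bigg|\frac{1}{n}\sum_{k=1}^{n}\psi(k)\bigg|^p \leq \sum_{n=1}^{\infty}\sup_{0<m<\infty}\bigg|\min\biggl\{\frac{1}{n},\frac{1}{m}\biggr\}\sum_{k=1}^{m}\psi(k)\bigg|^p,
\end{align*}
which is exactly the elementary comparison \eqref{sharper1}. Consequently, were the improved inequality to hold with some constant $C<\big(\frac{p}{p-1}\big)^p$, then restricting to nonnegative sequences $\psi$ would produce the classical Hardy inequality \eqref{hardy} with that same smaller constant $C$, contradicting the known optimality of $\big(\frac{p}{p-1}\big)^p$ there. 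Hence the constant is sharp.

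I do not expect the forward inequality to present any real obstacle, since the conceptual difficulty has already been absorbed into the preliminaries: Proposition \ref{key-prop} encodes the uniform (in $m$) control of the supremum by the rearranged weighted average, and Lemma \ref{com-lem} resolves the tension between the signed nature of $\psi$ and the nonnegativity required to invoke \eqref{hardy}, by passing to $\tilde\psi$ while keeping the $\ell^p$ norm unchanged. Within the theorem itself, the one step meriting care is therefore the sharpness transfer, whose correctness hinges entirely on the pointwise domination \eqref{sharper1} obtained above; once that comparison is in hand, optimality descends immediately from the classical case.
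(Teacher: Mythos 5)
Your proposal is correct and follows essentially the same route as the paper: specialize Proposition \ref{key-prop} to $w(s)=s$, sum over $n$, apply the classical inequality \eqref{hardy} to the nonnegative rearrangement $\tilde\psi$, invoke the norm identity \eqref{norm}, and deduce sharpness from the domination \eqref{sharper1} together with the optimality of the constant in \eqref{hardy}. Your write-up merely spells out the sharpness transfer in slightly more detail than the paper does.
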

\begin{proof}
Let us consider the weight function $w(n)=n$ for all $n\geq 1$ in Proposition \ref{key-prop}, so we obtain  
\begin{align*}
    \sup_{0< m <\infty}\bigg|\min\biggl\{\frac{1}{n}\:,\:\frac{1}{m}\biggr\}\sum_{k=1}^{m}\psi(k)\bigg|^{p}\leq \left(\frac{1}{n}\sum_{k=1}^{n}|\Tilde{\psi}(k)|\right)^{p}
\end{align*}
for all $n\ge 1$. 

Therefore, taking the summation over $n$ we have
	\begin{align*}
		&\sum_{n=1}^{\infty}\sup_{0<m<\infty}\bigg|\min\biggl\{\frac{1}{n},\frac{1}{m}\biggr\}\sum_{k=1}^{m}\psi(k)\bigg|^p \\& \leq \sum_{n=1}^{\infty} \left( \frac{1}{n}\sum_{k=1}^{n}|\Tilde{\psi}(k)|\right)^{p}\\&\leq \bigg(\frac{p}{p-1}\bigg)^p
		\sum_{n=1}^{\infty}|\Tilde{\psi}(n)|^{p}\\&\overset{\eqref{norm}}{=}\bigg(\frac{p}{p-1}\bigg)^p
		\sum_{n=1}^{\infty}|\psi(n)|^{p}.
	\end{align*}
Here we have used the classical discrete Hardy inequality \eqref{hardy}. Now the inequality \eqref{sharper1} implies sharpness of the constant since $\big(\frac{p}{p-1}\big)^p$  is sharp for the inequality  \eqref{hardy}.
\end{proof}

The following theorem can be derived from  Theorem \ref{mainthm-1}. We provide its short proof.
\begin{theorem}\label{mainthm-2}
	Let $1<p<\infty$ and $\psi\in C_c(\mathbb{N})$ with $\psi(0)=0$. Then there holds
	\begin{align*}
	\sum_{n=1}^{\infty}\max\biggl\{\sup_{0< m \leq n}\frac{|\psi(m)|^p}{n^p},\: \sup_{n\leq  m <\infty} \frac{|\psi(m)|^p}{m^p}\biggr\}
		\leq \bigg(\frac{p}{p-1}\bigg)^p \sum_{n=1}^{\infty}\big|\nabla\psi(n)\big|^p.
	\end{align*} 
Moreover, the constant $\big(\frac{p}{p-1}\big)^p$ is sharp.
\end{theorem}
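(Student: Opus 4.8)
The plan is to derive this inequality directly from Theorem \ref{mainthm-1} by a telescoping substitution. First I would set $\phi := \nabla\psi$, that is $\phi(n) = \psi(n)-\psi(n-1)$. Because $\psi\in C_c(\mathbb{N})$ satisfies the Dirichlet condition $\psi(0)=0$, the sequence $\phi$ again lies in $C_c(\mathbb{N})$, and its partial sums telescope:
\begin{align*}
\sum_{k=1}^{m}\phi(k)=\sum_{k=1}^{m}\bigl(\psi(k)-\psi(k-1)\bigr)=\psi(m)
\end{align*}
for every $m\in\mathbb{N}$. Applying Theorem \ref{mainthm-1} with $\psi$ replaced by $\phi$ then yields
\begin{align*}
\sum_{n=1}^{\infty}\sup_{0<m<\infty}\bigg|\min\biggl\{\frac{1}{n},\frac{1}{m}\biggr\}\psi(m)\bigg|^p \leq \bigg(\frac{p}{p-1}\bigg)^p\sum_{n=1}^{\infty}|\nabla\psi(n)|^p,
\end{align*}
which already exhibits the desired right-hand side.

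It remains to identify the left-hand side with the maximum of two suprema in the statement. For a fixed $n$ I would split the inner supremum according to whether $m\leq n$ or $m\geq n$. When $m\leq n$ one has $1/m\geq 1/n$, so $\min\{1/n,1/m\}=1/n$; when $m\geq n$ one has $1/m\leq 1/n$, so $\min\{1/n,1/m\}=1/m$. The two cases agree at $m=n$, both giving $|\psi(n)|^p/n^p$, so the overlap causes no difficulty. Consequently
\begin{align*}
\sup_{0<m<\infty}\bigg|\min\biggl\{\frac{1}{n},\frac{1}{m}\biggr\}\psi(m)\bigg|^p=\max\biggl\{\sup_{0<m\leq n}\frac{|\psi(m)|^p}{n^p},\ \sup_{n\leq m<\infty}\frac{|\psi(m)|^p}{m^p}\biggr\},
\end{align*}
and summing over $n$ gives precisely the claimed inequality.

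For sharpness I would invoke \eqref{sharper2}, which asserts that the left-hand side of the present theorem dominates $\sum_{n=1}^{\infty}|\psi(n)|^p/n^p$, the left-hand side of the classical gradient Hardy inequality \eqref{grad-hardy}. Since $\bigl(\frac{p}{p-1}\bigr)^p$ is sharp in \eqref{grad-hardy}, any strictly smaller constant in the present theorem would, via \eqref{sharper2}, produce an improved constant in \eqref{grad-hardy}, which is impossible; hence the constant is sharp.

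I expect the only delicate point to be the bookkeeping in the second paragraph: one must verify that the supremum over all $m$ genuinely separates into the two one-sided suprema and that the boundary term $m=n$ is accounted for consistently in both pieces. Everything else is a direct appeal to Theorem \ref{mainthm-1} together with the already-established comparison \eqref{sharper2}.
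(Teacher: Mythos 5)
Your proposal is correct and follows essentially the same route as the paper's own proof: substitute $\phi=\nabla\psi$, telescope the partial sums to get $\psi(m)$, apply Theorem \ref{mainthm-1}, split the supremum over $m$ into the regions $m\leq n$ and $m\geq n$, and deduce sharpness from \eqref{sharper2} together with the sharpness of the constant in \eqref{grad-hardy}. No gaps.
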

\begin{proof}
Let $\psi\in C_c^\infty$ with $\psi(0)=0$. Define a new function $\phi$:
\begin{align*}
    \phi(n):=\psi(n)-\psi(n-1)=\nabla\psi(n) \quad \text{for all}\quad n\geq 1. 
\end{align*}
Thus, we have $\phi\in C_c^\infty(\mathbb{N})$. Now applying Theorem \ref{mainthm-1} for $\phi$, we deduce
\begin{align*}
	 \sum_{n=1}^{\infty}\sup_{0< m <\infty}\bigg|\min\biggl\{\frac{1}{n}\:,\:\frac{1}{m}\biggr\}\sum_{k=1}^{m}\phi(k)\bigg|^p\leq 	\bigg(\frac{p}{p-1}\bigg)^p \sum_{n=1}^{\infty}\big|\phi(n)\big|^p.
	\end{align*} 

Let us expand the left-hand side of this inequality: 
\begin{align*}
 &\sum_{n=1}^{\infty}\sup_{0< m <\infty}\bigg|\min\biggl\{\frac{1}{n}\:,\:\frac{1}{m}\biggr\}\sum_{k=1}^{m}\phi(k)\bigg|^p\\&=\sum_{n=1}^{\infty}\sup_{0< m <\infty}\bigg|\min\biggl\{\frac{1}{n}\:,\:\frac{1}{m}\biggr\}\sum_{k=1}^{m}\big(\psi(k)-\psi(k-1)\big)\bigg|^p\\&= \sum_{n=1}^{\infty}\sup_{0< m <\infty}\bigg|\min\biggl\{\frac{1}{n}\:,\:\frac{1}{m}\biggr\}\psi(m)\bigg|^p\\&=\sum_{n=1}^{\infty}\max\biggl\{\sup_{0< m \leq n}\frac{|\psi(m)|^p}{n^p},\: \sup_{n\leq  m <\infty} \frac{|\psi(m)|^p}{m^p}\biggr\}.
\end{align*}
This completes the proof of Theorem \ref{mainthm-2}. Now the inequality \eqref{sharper2} implies sharpness of the constant since $\big(\frac{p}{p-1}\big)^p$  is sharp for the inequality  \eqref{grad-hardy}.
\end{proof}

Now let us state an improvement of the inequality \eqref{best-hardy}.
\begin{theorem}\label{best-hardy-imp}
  Let $p\in\n$ with $p\ge 2$. Then for all $\psi\in C_c(\n)$ there holds
\begin{align}\label{eqn-best-hardy-imp}
     \sum_{l\in 2\n\cup\{0\}} c_{l}\sum_{n=1}^{\infty}\sup_{0< m <\infty}\bigg|\min\biggl\{\frac{1}{n^{\frac{l+p}{p}}},\frac{1}{m^{\frac{l+p}{p}}}\biggr\}\sum_{k=1}^{m}\psi(k)\bigg|^p \leq \sum_{n=1}^{\infty}|\psi(n)|^p,
\end{align}
where $c_{l}$ is given in \eqref{vpn}. 
\end{theorem}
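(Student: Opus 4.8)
The plan is to mimic the proof of Theorem \ref{mainthm-1}, but with the key generalization being the choice of weight function $w(n)=n^{(l+p)/p}$ for each fixed $l\in 2\n\cup\{0\}$, and then to sum the resulting inequalities against the positive coefficients $c_l$. The central observation is that the inner supremum expression in \eqref{eqn-best-hardy-imp} is exactly the left-hand side of Proposition \ref{key-prop} with this particular weight, so if I can verify that this weight satisfies the two hypotheses of Proposition \ref{key-prop}, then the proposition hands me a pointwise bound for each $n$ and each $l$.

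First I would fix $l\in 2\n\cup\{0\}$ and set $w(n)=n^{(l+p)/p}$. I must check the two conditions: that $w$ is strictly positive and non-decreasing (immediate, since the exponent $(l+p)/p\geq 1>0$), and that $m\,w(n)\leq n\,w(m)$ whenever $n\leq m$. The latter amounts to $m\,n^{(l+p)/p}\leq n\,m^{(l+p)/p}$, i.e. $n^{(l+p)/p-1}\leq m^{(l+p)/p-1}$, which holds because $(l+p)/p-1=l/p\geq 0$ and $n\leq m$. With both hypotheses confirmed, Proposition \ref{key-prop} yields, for each $n$,
\begin{align*}
  \sup_{0<m<\infty}\bigg|\min\biggl\{\frac{1}{n^{\frac{l+p}{p}}},\frac{1}{m^{\frac{l+p}{p}}}\biggr\}\sum_{k=1}^{m}\psi(k)\bigg|^p \leq \left(\frac{1}{n^{\frac{l+p}{p}}}\sum_{k=1}^{n}|\Tilde{\psi}(k)|\right)^p.
\end{align*}

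Next I would sum over $n\geq 1$ and recognize the right-hand side as a weighted discrete Hardy sum: since $(n^{(l+p)/p})^{-p}=n^{-(l+p)}$, the sum $\sum_{n}\big(n^{-(l+p)/p}\sum_{k=1}^n|\Tilde\psi(k)|\big)^p$ is precisely $\sum_{n}\frac{1}{n^{l+p}}\big(\sum_{k=1}^n|\Tilde\psi(k)|\big)^p$, which is the quantity controlled by \eqref{best-hardy} applied to the rearranged sequence $\tilde\psi$ (a legitimate nonnegative sequence in $C_c(\n)$). Multiplying each inequality by $c_l\geq 0$, summing over $l\in 2\n\cup\{0\}$, and then interchanging the order of summation exactly as in the passage from \eqref{vpn} to \eqref{best-hardy}, I would obtain $\sum_l c_l\sum_n n^{-(l+p)}\big(\sum_{k=1}^n|\tilde\psi(k)|\big)^p\leq \sum_n|\tilde\psi(n)|^p$, and finally invoke the norm-preservation identity \eqref{norm} to replace $\sum_n|\tilde\psi(n)|^p$ by $\sum_n|\psi(n)|^p$.

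The step I expect to require the most care is the interchange of the double summation over $l$ and $n$ together with the application of \eqref{best-hardy} to $\tilde\psi$; one should confirm that all terms are nonnegative (guaranteed by $c_l>0$ from \eqref{vpn}, since $p\geq 2$ is an integer) so that Tonelli-type rearrangement of the series is justified, and that \eqref{best-hardy} is indeed the inequality obtained by substituting \eqref{vpn} and interchanging orders, which is valid for the nonnegative finitely supported sequence $\tilde\psi$. The only mild subtlety is that \eqref{best-hardy} as stated uses $\sum_{k=1}^n\psi(k)$ directly, whereas I apply it to $\tilde\psi$ after bounding $\big|\sum_{k=1}^m\psi(k)\big|$ by $\sum_{k=1}^n|\tilde\psi(k)|$ through Proposition \ref{key-prop}; threading these two facts together is the crux, but each individual link is already furnished by the results above.
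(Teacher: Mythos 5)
Your proposal is correct and follows essentially the same route as the paper's own proof: fix $l$, verify that $w(n)=n^{(l+p)/p}$ satisfies the hypotheses of Proposition \ref{key-prop} (the paper phrases the check as $\frac{m}{n}\leq(\frac{m}{n})^{(l+p)/p}$ since $\frac{l+p}{p}\geq 1$, equivalent to your $l/p\geq 0$ argument), multiply by $c_l\geq 0$, sum over $n$ and $l$, apply \eqref{best-hardy} to $\tilde\psi$, and conclude with \eqref{norm}. No gaps.
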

\begin{proof}
Let us fix $p\in\n$ with $p\geq 2$ and $l\in 2\n\cup\{0\}$. Notice that $\frac{l+p}{p}\geq 1$, so for any $n\leq m$ we have
\begin{align}\label{m-1}
    \frac{m}{n}\leq \bigg(\frac{m}{n}\bigg)^{\frac{l+p}{p}}.
\end{align}
Therefore,  setting $w(n)=n^{\frac{l+p}{p}}$ in Proposition \ref{key-prop} we get
	\begin{align}\label{m-2}
	\sup_{0<m<\infty}\bigg|\min\biggl\{\frac{1}{n^{\frac{l+p}{p}}},\frac{1}{m^{\frac{l+p}{p}}}\biggr\}\sum_{k=1}^{m}\psi(k)\bigg|^p  \leq \left( \frac{1}{n^{\frac{l+p}{p}}}\sum_{k=1}^{n}|\Tilde{\psi}(k)|\right)^{p}
	\end{align}
Multiplying by nonnegative constant $c_{l}$ to both sides of \eqref{m-2} and taking summation over $n$, we deduce
	\begin{align}\label{m-3}
	    c_{l}\sum_{n=1}^{\infty}\sup_{0<m<\infty}\bigg|\min\biggl\{\frac{1}{n^{\frac{l+p}{p}}},\frac{1}{m^{\frac{l+p}{p}}}\biggr\}\sum_{k=1}^{m}\psi(k)\bigg|^p\leq c_{l}\sum_{n=1}^{\infty} \frac{1}{n^{l+p}}\bigg|\sum_{k=1}^{n}|\Tilde{\psi}(k)|\bigg|^{p}.
	\end{align}
Again taking summation over $l$ and using \eqref{best-hardy} for $\tilde{\psi}$, we obtain
\begin{align*}
    &\sum_{l\in 2\n\cup\{0\}} c_{l}\sum_{n=1}^{\infty}\sup_{0< m <\infty}\bigg|\min\biggl\{\frac{1}{n^{\frac{l+p}{p}}},\frac{1}{m^{\frac{l+p}{p}}}\biggr\}\sum_{k=1}^{m}\psi(k)\bigg|^p\\&\leq \sum_{l\in 2\n\cup\{0\}}c_{l}\sum_{n=1}^{\infty}\frac{1}{n^{l+p}}\bigg|\sum_{k=1}^{n}|\tilde{\psi}(k)|\bigg|^p\\&\overset{\eqref{best-hardy}}{\leq}\sum_{n=1}^{\infty}|\tilde{\psi}(n)|^p\overset{\eqref{norm}}{=}\sum_{n=1}^{\infty}|\psi(n)|^p.
\end{align*}
It completes the proof.
\end{proof}

Directly following the idea of the proof of Theorem \ref{mainthm-2}, a version of  Theorem \ref{best-hardy-imp} can be obtained. We state it without proof.
\begin{theorem}\label{mainthm-3}
 Let $p\in\n$ with $p\ge 2$. Then for all $\psi\in C_c(\n)$ with $\psi(0)=0$ there holds
\begin{align}\label{eqn-mainthm-3}
     \sum_{l\in 2\n\cup\{0\}} c_{l}\sum_{n=1}^{\infty}\max\biggl\{\sup_{0< m \leq n}\frac{|\psi(m)|^p}{n^{l+p}},\: \sup_{n\leq  m <\infty} \frac{|\psi(m)|^p}{m^{l+p}}\biggr\} \leq
     \sum_{n=1}^{\infty}|\nabla \psi(n)|^p,
\end{align}
where $c_{l}$ is given in \eqref{vpn}.   
\end{theorem}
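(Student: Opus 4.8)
The plan is to mirror the derivation of Theorem \ref{mainthm-2} from Theorem \ref{mainthm-1}, now using Theorem \ref{best-hardy-imp} in place of Theorem \ref{mainthm-1}. First I would introduce the first-order difference $\phi(n):=\nabla\psi(n)=\psi(n)-\psi(n-1)$. Since $\psi\in C_c(\n)$, the sequence $\phi$ is again finitely supported, so $\phi\in C_c(\n)$ and \eqref{eqn-best-hardy-imp} applies to it directly (that theorem carries no boundary hypothesis). Applying Theorem \ref{best-hardy-imp} to $\phi$ yields
\[
\sum_{l\in 2\n\cup\{0\}} c_{l}\sum_{n=1}^{\infty}\sup_{0< m <\infty}\bigg|\min\biggl\{\frac{1}{n^{\frac{l+p}{p}}},\frac{1}{m^{\frac{l+p}{p}}}\biggr\}\sum_{k=1}^{m}\phi(k)\bigg|^p \leq \sum_{n=1}^{\infty}|\phi(n)|^p = \sum_{n=1}^{\infty}|\nabla\psi(n)|^p,
\]
whose right-hand side is already the right-hand side of \eqref{eqn-mainthm-3}.

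The key step is then to simplify the inner sum on the left by telescoping. Using the Dirichlet condition $\psi(0)=0$, one has $\sum_{k=1}^{m}\phi(k)=\sum_{k=1}^{m}\bigl(\psi(k)-\psi(k-1)\bigr)=\psi(m)$, so the $p$-th power reduces to $\bigl|\min\{n^{-(l+p)/p},m^{-(l+p)/p}\}\,\psi(m)\bigr|^p$. It remains to evaluate $\sup_{0<m<\infty}$. Since $x\mapsto x^{-(l+p)/p}$ is decreasing (because $(l+p)/p\geq 1$), for $0<m\leq n$ the minimum equals $n^{-(l+p)/p}$, giving the summand $|\psi(m)|^p/n^{l+p}$, while for $n\leq m<\infty$ the minimum equals $m^{-(l+p)/p}$, giving $|\psi(m)|^p/m^{l+p}$. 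Splitting the supremum over $m$ into these two regimes and combining them with a maximum reproduces exactly the left-hand side of \eqref{eqn-mainthm-3}.

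There is essentially no analytic obstacle here: because $\psi$ is finitely supported, all sums over $n$ and $l$ are finite, no interchange of limits needs justification, and the telescoping identity is exact. The only point requiring a little care is the case analysis identifying $\sup_{0<m<\infty}$ with the maximum of the two one-sided suprema, which is precisely the computation already performed in the proof of Theorem \ref{mainthm-2}, here carried out with the weight $w(n)=n^{(l+p)/p}$ in place of $w(n)=n$, so that $w(n)^{-p}=n^{-(l+p)}$. Consequently the argument goes through verbatim once the substitution $\phi=\nabla\psi$ is made, which is why the statement can legitimately be recorded without a full proof.
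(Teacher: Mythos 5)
Your argument is correct and is exactly the route the paper intends: it states Theorem \ref{mainthm-3} without proof precisely because it follows by applying Theorem \ref{best-hardy-imp} to $\phi=\nabla\psi$ and repeating the telescoping and case analysis from the proof of Theorem \ref{mainthm-2}. Your write-up fills in those details correctly.
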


\section{Discrete uncertainty principle}\label{4}
In this section, we discuss some immediate consequences of the obtained improved discrete Hardy inequality. In quantum mechanics, the uncertainty principle is a fundamental concept. It states that there is a limit to the precision with which certain pairs of physical properties, such as position and momentum, can be simultaneously known. The most well-known formulation of the uncertainty principle is probably the Heisenberg-Pauli-Weyl uncertainty principle, which is (in the discrete setting) one of the building blocks of condensed matter physics. Here we present another form of the uncertainty principle for 1D lattice. 
\begin{theorem}\label{hpw}
Let $1<p,q<\infty$ and  $1/p+1/q=1$. Then for any $\psi\in C_c(\n)$ with $\psi(0)=0$ there holds 
  \begin{align*}
     &\bigg(\frac{p-1}{p}\bigg)\max\biggl\{\sum_{n=1}^{\infty}\sup_{0< m \leq n}|\psi(m)|^p,\: \sum_{n=1}^{\infty}\sup_{n\leq  m <\infty} |\psi(m)|^p\biggr\}\\&\leq ||\nabla \psi(n)||_{\ell^p(\N)}\,\max\biggl\{\bigg(\sum_{n=1}^{\infty}\sup_{0< m \leq n}n^q|\psi(m)|^p\bigg)^{\frac{1}{q}}, \bigg(\sum_{n=1}^{\infty}\sup_{n\leq  m <\infty} {m^q}{|\psi(m)|^p}\bigg)^{\frac{1}{q}}\biggr\}.
 \end{align*}   
\end{theorem}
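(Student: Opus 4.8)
The plan is to obtain each of the two quantities appearing inside the outer maxima separately, directly from Theorem \ref{mainthm-2} via a single application of Hölder's inequality, and then recombine. The algebraic engine is the elementary factorization, valid whenever $1/p+1/q=1$: for any strictly positive weight $w$,
\[
|\psi(m)|^p = \left(\frac{|\psi(m)|^p}{w^{p}}\right)^{1/p}\left(w^{q}|\psi(m)|^p\right)^{1/q},
\]
since the exponents recombine as $1+p/q=p$. Because Theorem \ref{mainthm-2} controls $\sum_n\max\{\cdots\}$ and a maximum dominates each of its entries, I may discard one branch of the inner maximum and retain the inequality; this furnishes the two ``one-branch'' Hardy estimates
\[
\sum_{n=1}^\infty \sup_{0<m\le n}\frac{|\psi(m)|^p}{n^p}\le\Big(\tfrac{p}{p-1}\Big)^p\|\nabla\psi\|_{\ell^p(\N)}^p,\qquad \sum_{n=1}^\infty \sup_{n\le m<\infty}\frac{|\psi(m)|^p}{m^p}\le\Big(\tfrac{p}{p-1}\Big)^p\|\nabla\psi\|_{\ell^p(\N)}^p,
\]
each of which will serve as the ``Hardy factor'' in the corresponding branch.

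For the first branch I take $w=n$, which is constant with respect to the inner supremum variable $m\le n$. Hence the factorization passes through $\sup_{0<m\le n}$ as an exact identity,
\[
\sup_{0<m\le n}|\psi(m)|^p=\Big(\sup_{0<m\le n}\tfrac{|\psi(m)|^p}{n^p}\Big)^{1/p}\Big(\sup_{0<m\le n} n^{q}|\psi(m)|^p\Big)^{1/q}.
\]
Summing over $n$, applying Hölder with conjugate exponents $p$ and $q$, and bounding the resulting $\ell^p$-factor by the first one-branch estimate above, I arrive at
\[
\frac{p-1}{p}\sum_{n=1}^\infty\sup_{0<m\le n}|\psi(m)|^p\le \|\nabla\psi\|_{\ell^p(\N)}\Big(\sum_{n=1}^\infty\sup_{0<m\le n}n^{q}|\psi(m)|^p\Big)^{1/q}.
\]

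For the second branch the weight $w=m$ now varies with the supremum variable, so the factorization, combined with the submultiplicativity $\sup(fg)\le(\sup f)(\sup g)$ for nonnegative quantities, yields the inequality
\[
\sup_{n\le m<\infty}|\psi(m)|^p\le\Big(\sup_{n\le m<\infty}\tfrac{|\psi(m)|^p}{m^p}\Big)^{1/p}\Big(\sup_{n\le m<\infty} m^{q}|\psi(m)|^p\Big)^{1/q}.
\]
Summing, applying Hölder, and invoking the second one-branch estimate produces the analogous bound with $\sup_{n\le m<\infty}$ and weight $m^q$. Taking the maximum of the two resulting inequalities, and noting that each of their right-hand sides is dominated by the outer maximum on the right, completes the argument.

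The only genuinely delicate point is the interaction between the factorization and the supremum: for the first branch it is an \emph{equality}, because the weight $n$ does not see the supremum variable, whereas for the second branch one must pass through submultiplicativity of the supremum, which fortunately still points in the favorable direction. Everything else is bookkeeping of the relation $1/p+1/q=1$ together with two applications of Hölder's inequality.
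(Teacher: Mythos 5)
Your proposal is correct and follows essentially the same route as the paper: factor $|\psi(m)|^p$ into a Hardy-weighted piece and a dual-weighted piece, apply H\"older's inequality in $n$, bound the Hardy factor by discarding one branch of the maximum in Theorem \ref{mainthm-2}, and combine the two branches via the outer maxima. Your explicit remark that the first branch gives an exact identity while the second requires $\sup(fg)\leq(\sup f)(\sup g)$ is in fact slightly more careful than the paper, which writes both factorizations as equalities.
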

\begin{proof}
For $\psi\in C_c(\n)$ we compute
\begin{align*}
    &\sum_{n=1}^{\infty}\sup_{0< m \leq n}|\psi(m)|^p=\sum_{n=1}^\infty\bigg(\sup_{0< m \leq n}\frac{|\psi(m)|}{n}\bigg)\, \bigg(\sup_{0< m \leq n}n|\psi(m)|^{p-1}\bigg)\\&\leq \bigg(\sum_{n=1}^\infty \sup_{0< m \leq n}\frac{|\psi(m)|^p}{n^p}\bigg)^{\frac{1}{p}}\bigg(\sum_{n=1}^\infty \sup_{0< m \leq n}n^q|\psi(m)|^{p}\bigg)^{\frac{1}{q}}\\&\leq\bigg(\frac{p}{p-1}\bigg)\bigg(\sum_{n=1}^\infty |\nabla \psi(n)|^p\bigg)^{\frac{1}{p}}\bigg(\sum_{n=1}^\infty \sup_{0< m \leq n}n^q|\psi(m)|^{p}\bigg)^{\frac{1}{q}}.
\end{align*}
Similarly, we have
\begin{align*}
    &\sum_{n=1}^{\infty}\sup_{n\leq m <\infty}|\psi(m)|^p=\sum_{n=1}^\infty\bigg(\sup_{n\leq m <\infty}\frac{|\psi(m)|}{m}\bigg)\, \bigg(\sup_{n\leq m <\infty}m|\psi(m)|^{p-1}\bigg)\\&\leq \bigg(\sum_{n=1}^\infty \sup_{n\leq m <\infty}\frac{|\psi(m)|^p}{m^p}\bigg)^{\frac{1}{p}}\bigg(\sum_{n=1}^\infty \sup_{n\leq m <\infty}m^q|\psi(m)|^{p}\bigg)^{\frac{1}{q}}\\&\leq\bigg(\frac{p}{p-1}\bigg)\bigg(\sum_{n=1}^\infty |\nabla \psi(n)|^p\bigg)^{\frac{1}{p}}\bigg(\sum_{n=1}^\infty \sup_{n\leq  m <\infty}m^q|\psi(m)|^{p}\bigg)^{\frac{1}{q}}.
\end{align*}
In both the occasions, first the H\"older inequality is used and then Theorem \ref{mainthm-2} is applied. Finally, combining all together we arrive at the desired uncertainty principle.
\end{proof}

\section*{Acknowledgements} 
This research was funded by the Committee of Science of the Ministry of Science and Higher Education of Kazakhstan (Grant No. AP19674900). This work was also supported by Nazarbayev University grant 20122022FD4105. This project was discussed when the authors met at the Ghent Analysis \& PDE Center at Ghent University in Summer 2022. During the visit, the authors were supported by the FWO Odysseus 1 grant G.0H94.18N: Analysis and PDEs and by the Methusalem programme of the Ghent University Special Research Fund (BOF) (Grant number 01M01021). The authors would like to thank Prof. Michael Ruzhansky and the university for their support and hospitality.

\noindent

\end{document}